\newtheorem{theorem}{Theorem}
\numberwithin{theorem}{section}
\numberwithin{equation}{section}
\newtheorem{lemma}[theorem]{Lemma}
\newtheorem{proposition}[theorem]{Proposition}
\newtheorem{question}{Question}
\newcommand{\Z}{\mathbb{Z}}
\newcommand{\pZ}{\widehat{\Z}}
\newcommand{\hD}{\widehat{\Delta}}
\newcommand{\hG}{\widehat{\Gamma}}
\newcommand{\hM}{\widehat{M}}
\newcommand{\hN}{\widehat{N}}
\newcommand{\at}[1]{|_{#1}}
\newcommand{\aug}{\mathcal{I}_\Upsilon}
\newcommand{\paug}{\mathcal{I}_{\widehat{\Upsilon}}}
\newcommand{\palg}{{\pZ[\![\widehat{\Upsilon}]\!]}}
\newcommand{\alg}{{\Z[\Upsilon]}}
\DeclareMathOperator{\Ext}{Ext}
\newcommand{\ishortcap}{%
  \scalebox{0.8}[0.63]{$\mathrm{I}$}%
}
\newcommand{\cshortcap}{%
  \scalebox{0.5}[0.63]{$\mathbf{C}$}%
}
\newcommand{\ioteda}{%
  \vcenter{\hbox{%
    \scalebox{1.1}[1.1]{%
      \ishortcap
      \kern -0.1em %
      \rule[0.46ex]{0.25em}{0.35pt} %
      \kern -0.5em %
      $\mathrm{a}$ %
}}} %
\mkern-6mu %
}
\newcommand{\iotede}{%
  \vcenter{\hbox{%
    \scalebox{1.1}[1.1]{%
      \ishortcap
      \kern -0.1em %
      \rule[0.46ex]{0.33em}{0.37pt} %
      \kern -0.6em %
      \cshortcap %
}}}}
\newcommand{\iotedesmall}{%
  \vcenter{\hbox{%
    \scalebox{1.0}[1.0]{%
      \ishortcap
      \kern -0.1em %
      \rule[0.3ex]{0.25em}{0.3pt} %
      \kern -0.54em %
      \cshortcap %
}}}}
\newcommand{\iotedasmall}{%
  \vcenter{\hbox{%
    \scalebox{1.0}[1.0]{%
      \raisebox{0ex}{\(\ishortcap\)}%
      \kern -0.1em %
      \rule[0.3ex]{0.2em}{0.3pt} %
      \kern -0.48em %
      $\mathrm{a}$}}}}
\title{The Profinite Rigidity of Free Metabelian Groups}
\author{Julian Wykowski}
\address{Department of Pure Mathematics and Mathematical Statistics, Centre for Mathematical Sciences, Wilberforce Road, CB3 0WB, United Kingdom}
\email{jw2006@cam.ac.uk}
\date{\today}
\begin{document}
\begin{abstract}
We prove that finitely generated free metabelian groups $\Psi_n$ are profinitely rigid in the absolute sense: they are distinguished by their finite quotients among all finitely generated residually finite groups. The proof is based on a previous result of the author governing profinite rigidity for modules over Noetherian domains, as well as a homological characterisation of free metabelian groups due to Groves--Miller.
\end{abstract}
\maketitle
\vspace{-0.2in}
\begin{flushright}
    \textit{Dedicated to Maria Wykowska}
\end{flushright}
\vspace{0.1in}
\section{Introduction}
Is it possible to detect the isomorphism type of a group in its finite images? This question, known as \emph{profinite rigidity}, has been a major theme in group theory for the past half-century (see \cite[Chapter 3]{Gareth_Book} for an introduction and \cite{Reid_Survey,bridson_survey2} for a survey). Usually, one works within the framework of profinite completions, enabling one to appeal to the formalism of topological groups. Indeed, the profinite completion $\widehat{\Gamma}$ of a discrete group $\Gamma$ is the profinite group given by the inverse limit of the inverse system of finite quotients of $\Gamma$. We then say that a finitely generated residually finite group $\Gamma$ is \emph{profinitely rigid} in the absolute sense if $\hG \cong \hD$ implies $\Gamma \cong \Delta$ for any finitely generated residually finite group $\Delta$. This is equivalent to the property that the isomorphism type of $\Gamma$ is distinguished by its collection of finite quotients among all finitely generated residually finite groups \cite{classic}. Despite considerable research, the absolute question has been answered for remarkably few groups: see \cite[\S 1]{Me} for an exhaustive list. In particular, the question of profinite rigidity for free and free solvable (of a fixed derived length $k \in \mathbb{N}$) groups, attributed to Remeslennikov (see \cite[Question 15]{Remeslennikov} and \cite[Question 5.48]{kourovka}), has gained significant attention (cf. the surveys \cite{Bridson_survey,bridson_survey2,Reid_Survey}). Although there exist elegant results resolving the question in certain relative settings \cite{Bridson_survey, Jaikin_Genus,Wilton_LimitGps}, the absolute case remains widely open.
\begin{question}[Remeslennikov]\label{Question}
    Are finitely generated free (free $k$-solvable) groups profinitely rigid in the absolute sense?
\end{question}
In this article, we answer Question~\ref{Question} in the case $k = 2$, i.e. for free metabelian groups. The free metabelian group of rank $n$ is the maximal metabelian (that is, solvable of derived length 2) quotient $\Psi_n$ of the free group $\Phi_n$ on $n$ generators. It is finitely generated but not finitely presentable \cite{Baumslag_Metab}.
\begin{restatable}{theoremA}{thmA}\label{MainThm}
    Finitely generated free metabelian groups are profinitely rigid in the absolute sense.
\end{restatable}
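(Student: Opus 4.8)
\noindent\emph{Sketch of proof.}
The plan is to split the argument into three stages: a group‑theoretic reduction to a statement about modules over $\Lambda:=\Z[\Z^{n}]$, an application of the rigidity theorem for modules over a Noetherian domain established in \cite{Me} to identify the relevant module, and finally the Groves--Miller homological recognition theorem to pass from the module back to the group. So suppose $\Delta$ is a finitely generated residually finite group with $\hD\cong\widehat{\Psi_n}$. Since $\widehat{\Psi_n}$ is topologically finitely generated, its abstract derived subgroup is closed by the theorem of Nikolov--Segal, hence equal to $\overline{\Psi_n'}$, which is abelian because $\Psi_n'$ is; thus $\widehat{\Psi_n}$ is abstractly metabelian, and therefore so is its subgroup $\Delta$. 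Comparing abelianisations gives $\widehat{\Delta^{\mathrm{ab}}}\cong\widehat{\Psi_n}^{\mathrm{ab}}\cong\pZ^{n}$, whence the finitely generated abelian group $\Delta^{\mathrm{ab}}$ must be $\Z^{n}$. Setting $N=\Delta'$ and $M=\Psi_n'=\Phi_n'/\Phi_n''$, we obtain two extensions
\[
1\to N\to\Delta\to\Z^{n}\to1,\qquad 1\to M\to\Psi_n\to\Z^{n}\to1,
\]
in which, by P.~Hall's theorem that finitely generated metabelian groups satisfy the maximal condition on normal subgroups, both $N$ and $M$ are finitely generated modules over the Noetherian domain $\Lambda$, and $M$ is the classical relation module fitting (via the Magnus embedding) into the exact sequence $0\to M\to\Lambda^{n}\to\Lambda\to\Z\to0$.

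Next I would transport the profinite isomorphism to module data. Being fully characteristic, the derived subgroup is preserved, so $\hD\cong\widehat{\Psi_n}$ restricts to an isomorphism $\overline{N}\cong\overline{M}$ of closures and induces an isomorphism of quotients $\pZ^{n}\to\pZ^{n}$, say a matrix $\sigma\in\mathrm{GL}_n(\pZ)$. One then identifies the profinite topology that $N$ inherits from $\Delta$ with the $\Lambda$‑adic‑type topology whose completion is $N\otimes_{\Lambda}\widehat{\Lambda}$, where $\widehat{\Lambda}=\pZ[[\pZ^{n}]]$ is the completed group ring, and similarly for $M$; the isomorphism above then becomes a $\sigma$‑semilinear isomorphism $N\otimes_{\Lambda}\widehat{\Lambda}\cong M\otimes_{\Lambda}\widehat{\Lambda}$. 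Replacing $\Delta$ by its image under an automorphism of $\Psi_n$ lifting a well‑chosen element of $\mathrm{GL}_n(\Z)$ (the natural map $\mathrm{Aut}(\Psi_n)\to\mathrm{GL}_n(\Z)$ is surjective since $\Phi_n$ is free), we land exactly in the hypothesis of the rigidity theorem of \cite{Me}: two finitely generated $\Lambda$‑modules, one of them the relation module $M$, with isomorphic completions compatibly with the ambient group structure. That theorem then yields $N\cong M$ as $\Lambda$‑modules.

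It remains to promote this to a group isomorphism. At this point $\Delta$ is a finitely generated metabelian group with $\Delta^{\mathrm{ab}}\cong\Z^{n}$ and $\Delta'\cong\Phi_n'/\Phi_n''$ as $\Z[\Delta^{\mathrm{ab}}]$‑modules, and the homological characterisation of free metabelian groups due to Groves--Miller recognises any such group as $\Psi_n$. Should that criterion additionally require the class of $1\to N\to\Delta\to\Z^{n}\to1$ in $H^{2}(\Z^{n};M)$ to coincide with that of the free metabelian extension, one argues that this class is itself a profinite invariant, being faithfully recorded by the comparison map $H^{2}(\Z^{n};M)\to H^{2}(\pZ^{n};\hM)$, and is therefore pinned down. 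In either case $\Delta\cong\Psi_n$, which proves the theorem.

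I expect the second stage to be the crux. The delicate points are: verifying that the profinite topology on $\Delta'$ really is the adic topology needed to feed \cite{Me} — i.e.\ that $\overline{\Delta'}=\Delta'\otimes_{\Lambda}\widehat{\Lambda}$ rather than some coarser completion; and coping with the fact that the induced change of coordinates $\sigma\in\mathrm{GL}_n(\pZ)$ need not, a priori, be defined over $\Z$, so that the comparison of $M$ and $N$ is only semilinear over a profinite automorphism of the base. A secondary obstacle is checking that $\Delta$ meets the exact hypotheses of the Groves--Miller recognition theorem and, if required, that the extension class attached to the relation module is itself profinitely rigid.
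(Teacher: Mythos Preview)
Your overall architecture matches the paper's: reduce to metabelian, match abelianisations, pass to a module-theoretic statement over $\Lambda=\Z[\Z^{n}]$, apply the rigidity result from \cite{Me}, and conclude via Groves--Miller. You also correctly isolate the two genuine obstacles. However, your proposed handling of both is where the argument breaks, and the paper's fix is a single move that you are missing.

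The gap is that you work throughout with the derived subgroups $M=\Psi_n'$ and $N=\Delta'$ as $\Lambda$-modules. This causes three simultaneous problems. First, for $n>2$ the relation module $M$ is \emph{not} free over $\Lambda$ (it is only a second syzygy), so the rigidity theorem you need from \cite{Me}, which is stated for free modules over a homologically taut Noetherian domain, does not apply to $M$ directly. Second, and more seriously, your device for eliminating the semilinearity --- composing with an automorphism of $\Psi_n$ lifting an element of $\mathrm{GL}_n(\Z)$ --- cannot work: the twist $\sigma$ lives in $\mathrm{GL}_n(\pZ)$, and pre- or post-composing by integral matrices only moves $\sigma$ within its double coset, which has no reason to meet $\mathrm{GL}_n(\Z)$. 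You flag this yourself, but there is no repair along these lines. Third, the Groves--Miller criterion is not ``$\Delta'\cong M$ as $\Lambda$-modules''; it is that $N_\Delta:=\Lambda\otimes_{\Z[\Delta]}\mathcal I_\Delta$ is free of rank $n$. So even a module isomorphism $N\cong M$ would leave you with exactly the extension-class issue you worry about in your last paragraph.

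The paper resolves all three at once by replacing $M_\Gamma,M_\Delta$ with the modules $N_\Gamma,N_\Delta$ sitting in $0\to M_\Gamma\to N_\Gamma\to\mathcal I_\Upsilon\to0$. The point is that $N_\Gamma$ is \emph{free} of rank $n$ (this is one direction of Groves--Miller), hence $\widehat{N_\Gamma}\cong\widehat\Lambda^{\,n}$. On a free module the ring automorphism $A\in\mathrm{Aut}(\widehat\Lambda)$ induced by $\sigma$ acts coordinatewise, giving an additive automorphism $\widetilde A$ of $\widehat\Lambda^{\,n}$; precomposing the $A$-semilinear isomorphism $F\colon\widehat{N_\Gamma}\to\widehat{N_\Delta}$ with $\widetilde A^{-1}$ yields an honest $\widehat\Lambda$-linear isomorphism. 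Now Quillen--Suslin makes $\Lambda$ homologically taut, so the rigidity theorem of \cite{Me} applies to the free module $N_\Gamma$ and gives $N_\Delta\cong N_\Gamma\cong\Lambda^n$; Groves--Miller then finishes immediately, with no separate bookkeeping of extension classes. In short, passing from $M$ to $N$ is the missing idea: it simultaneously supplies freeness (so the rigidity theorem applies), a mechanism for untwisting the profinite semilinearity, and the exact input to Groves--Miller.
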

Theorem~\ref{MainThm} contrasts with \cite{Pickel_Metabelian}, where infinite families of finitely generated metabelian groups with isomorphic profinite completions were constructed, and \cite{Nikolov_Segal_2}, where uncountable such families of solvable groups were found. We invite the reader also to compare Theorem~\ref{MainThm} with \cite[Conjecture 2]{Jaikin_Genus}.

The strategy of proof is outlined as follows. First, we decompose the group $\Psi_n$ as an extension of the free abelian group $\Z^n$ by the commutator subgroup $[\Psi_n,\Psi_n]$, the latter forming an infinitely generated abelian subgroup of $\Psi_n$ which acquires naturally the structure of a finitely generated module over the group algebra $\Z[\Z^n]$. Via classical results pertaining to profinite completions, we find that any finitely generated residually finite group $\Delta$ profinitely isomorphic to $\Psi_n$ must also be of this form. A result of Groves--Miller \cite{GM} then allows us to translate the question of whether $\Delta$ is free metabelian to a statement about a certain natural extension of the augmentation ideal $\mathcal{I}_\Delta$ by the module $[\Delta,\Delta]$. The latter statement is then established via the machinery of profinite rigidity over Noetherian domains, developed by the author in \cite{Me}, in conjunction with some homological machinery for profinite completions of modules developed in Section~\ref{Sec::SepCoh} of the present article. A key step in the proof appeals to the celebrated result of Quillen and Suslin \cite{quillen, suslin} that projective modules over rings of integral polynomials are free.

\section*{Acknowledgements}
The author is grateful to his PhD supervisor, Gareth Wilkes, for enlightening discussions and advice throughout the project. Thanks are also due to Francesco Fournier-Facio for helpful conversations. Financially, the author was supported by a Cambridge International Trust \& King's College Scholarship.
\section{Preliminaries}
In this section, we outline background results regarding metabelian groups and their homological algebra, as well as the theory of profinite rigidity for modules over Noetherian domains developed by the author in \cite{Me}.

\subsection{Metabelian Groups}\label{Sec::Metab}
A discrete group $\Gamma$ is said to be \emph{metabelian} if its commutator subgroup $[\Gamma, \Gamma]$ is abelian, or equivalently, if it is solvable of derived length at most two. We invite the reader to \cite[Chapter II.6]{History} for an overview of the history and properties of these groups. Writing $\alpha_\Gamma \colon \Gamma \twoheadrightarrow \Upsilon$ for the abelianisation of a metabelian group $\Gamma$, we obtain a short exact sequence of groups
\begin{equation}\label{Eq::SESMG}
    0 \to M_\Gamma = [\Gamma, \Gamma] \to \Gamma \xrightarrow{\alpha_\Gamma} \Upsilon \to 0
\end{equation}
with abelian commutator subgroup $M_\Gamma = [\Gamma, \Gamma]$. The supergroup $\Gamma$ then acts on $M_\Gamma$ by conjugation, which factors through the projection $\alpha_\Gamma \colon \Gamma \twoheadrightarrow \Upsilon$ as $M_\Gamma$ acts trivially on itself. Thus $M_\Gamma$ acquires naturally the structure of a module over the group algebra $\alg$ whereby $\Upsilon$ acts by conjugation via any equivalent choice of set-theoretic section of the projection $\alpha_\Gamma$. Moreover, \cite[Theorem VI.6.3]{hilton_stammbach} yields the short exact sequence of $\alg$-modules
\begin{equation}\label{Eq::SESMM}
    0 \to M_\Gamma \to N_\Gamma = \alg \otimes_{\Z[\Gamma]} \mathcal{I}_\Gamma \to \aug \to 0
\end{equation}
where $\mathcal{I}_\Gamma$ and $\mathcal{I}_\Gamma$ denote the augmentation ideals of $\Gamma$ and $\Upsilon$, respectively. The boundary isomorphism in the long exact sequence associated via $\Ext_{\Z[\Gamma]}^\ast(-,-)$ to the short exact sequence of the augmentation ideal then maps the class in $H^2(\Upsilon,M_\Gamma)$ corresponding to the extension (\ref{Eq::SESMG}) to the class in $\Ext^1_{\alg}(\aug,M_\Gamma)$ corresponding to the extension (\ref{Eq::SESMM}), as per \cite[Exercise VI.6.1]{hilton_stammbach}.

The free objects in the category of metabelian groups are the \emph{free metabelian groups}. To construct the free metabelian group $\Psi_n$ on $n$ generators, begin with the free group $\Phi_n$ on $n$ generators and take the maximal metabelian quotient
\[
\Psi_n = \frac{\Phi_n}{[[\Phi_n,\Phi_n],[\Phi_n,\Phi_n]]}
\]
whose commutator subgroup has trivial commutator itself and is thus abelian. For $n > 1$, the group $\Psi_n$ is $n$-generated but not finitely presentable \cite{Baumslag_Metab}. Its abelianisation is isomorphic to the free abelian group $\Psi_n^\mathrm{ab} \cong \Z^n$ on $n$ generators. For $n = 2$, the commutator subgroup $M_{\Psi_2} = [\Psi_2,\Psi_2]$ is free as a module over the group algebra $\Z[\Psi_2^\mathrm{ab}] \cong \Z[x_1^\pm, \ldots, x_n^\pm]$ with conjugacy module structure \cite[Proposition 14.4]{km}; however, for $n > 2$, this is no longer true. Nonetheless, we obtain the following homological characterisation of free metabelian groups due to Groves--Miller \cite{GM}.

\begin{theorem}[Corollary 3 in \cite{GM}]\label{Thm::HomMet}
    Let $\Gamma$ be a finitely generated metabelian group and $\alpha \colon \Gamma \twoheadrightarrow \Upsilon$ be its abelianisation. The group $\Gamma$ is free metabelian of rank $n$ if and only if $\Upsilon \cong \Z^n$ and the $\alg$-module
    $
    N_\Gamma = \alg \otimes_{\Z[\Gamma]} \mathcal{I}_\Gamma
    $
    is free of rank $n$.
\end{theorem}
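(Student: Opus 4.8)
I plan to prove the two directions separately, treating ``only if'' as a computation and concentrating the work on the converse. For ``only if'' one must show that $N_{\Psi_n}$ is free of rank $n$ over $\alg\cong\Z[\Z^n]$, which I would deduce from the freeness of the augmentation ideal $\mathcal{I}_{\Phi_n}$ as a $\Z[\Phi_n]$-module of rank $n$. Writing $\Psi_n=\Phi_n/\Phi_n''$, Gruenberg's resolution gives a short exact sequence of $\Z[\Psi_n]$-modules $0\to\Phi_n''/\Phi_n'''\to\Z[\Psi_n]^n\to\mathcal{I}_{\Psi_n}\to0$ whose left map sends the class of $r$ to its tuple of Fox derivatives $\partial r/\partial x_i$. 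Applying the right exact functor $\alg\otimes_{\Z[\Psi_n]}(-)$ presents $N_{\Psi_n}$ as the cokernel of the induced map $\alg\otimes_{\Z[\Psi_n]}(\Phi_n''/\Phi_n''')\to\alg^n$; a short Fox-calculus induction, using that $[\Psi_n,\Psi_n]$ is abelian, shows that $\partial r/\partial x_i\in\ker(\Z[\Psi_n]\twoheadrightarrow\alg)$ for every $r\in\Phi_n''$, so the induced map vanishes and $N_{\Psi_n}\cong\alg^n$ with free basis the images of $x_1-1,\dots,x_n-1$ (alternatively one reads this off the Magnus embedding). Under this identification, (\ref{Eq::SESMM}) for $\Psi_n$ becomes the truncation $0\to M_{\Psi_n}\to\alg^n\to\aug\to0$ of the Koszul resolution of $\Z$ over $\alg$ along the regular sequence $x_1-1,\dots,x_n-1$, exhibiting $M_{\Psi_n}$ as the second syzygy of $\Z$.

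For the converse, assume $\Upsilon\cong\Z^n$ and $N_\Gamma\cong\alg^n$. The crux — which I expect to be the main obstacle — is to show that $\Gamma$ is generated by $n$ elements. A free basis $w_1,\dots,w_n$ of $N_\Gamma$ maps to a basis of $\Gamma^{\mathrm{ab}}\cong\Z^n$ under the canonical surjection $N_\Gamma\twoheadrightarrow N_\Gamma\otimes_\alg\Z\cong\mathcal{I}_\Gamma/\mathcal{I}_\Gamma^2\cong\Gamma^{\mathrm{ab}}$, and one wants to realise the $w_i$ as classes $\overline{\gamma_i-1}$ of elements $\gamma_i\in\Gamma$ that then generate $\Gamma$. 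This is a problem of lifting generators modulo the ideal $\ker(\Z[\Gamma]\twoheadrightarrow\alg)$, which is not contained in the Jacobson radical, so the naive Nakayama argument is unavailable; it is exactly here that the freeness of $N_\Gamma$ — as opposed to the weaker hypothesis $\Gamma^{\mathrm{ab}}\cong\Z^n$, which alone does not bound the number of generators of $\Gamma$ — has to be used. I would carry out this step with the structural finiteness of finitely generated metabelian groups, in particular P.~Hall's theorem that $M_\Gamma$ is then a Noetherian $\alg$-module, to control the relation module of $\Gamma$ and reduce to $n$ generators; this is the substance of Groves--Miller's proof.

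Granting that $\Gamma$ is $n$-generated, the remainder is formal. Fix a surjection $q\colon\Psi_n\twoheadrightarrow\Gamma$; then $q^{\mathrm{ab}}\colon\Z^n\twoheadrightarrow\Z^n$ is an isomorphism, and identifying $\Z[\Psi_n^{\mathrm{ab}}]$ with $\Z[\Gamma^{\mathrm{ab}}]=\alg$ along it, $q$ induces a commutative ladder from (\ref{Eq::SESMM}) for $\Psi_n$ to (\ref{Eq::SESMM}) for $\Gamma$ in which the right-hand vertical map is the identity of $\aug$ and the middle vertical map is a surjection $N_{\Psi_n}\cong\alg^n\twoheadrightarrow N_\Gamma\cong\alg^n$ of finitely generated modules over the Noetherian ring $\alg$. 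A surjective endomorphism of a Noetherian module is injective (the kernels of its iterates stabilise), so this middle map is an isomorphism, and hence so is $M_{\Psi_n}\to M_\Gamma$ by the five lemma. As $q$ induces the identity on abelianisations and restricts to this isomorphism on commutator subgroups, $\ker q=\ker(M_{\Psi_n}\to M_\Gamma)=0$, so $q$ is an isomorphism and $\Gamma\cong\Psi_n$.
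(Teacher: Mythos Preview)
The paper does not prove Theorem~\ref{Thm::HomMet}; it is quoted verbatim as ``Corollary 3 in \cite{GM}'' and used as a black box. There is therefore no proof in the paper to compare your proposal against.

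That said, your outline is sound and in fact tracks the Groves--Miller argument closely. The ``only if'' computation via the Gruenberg presentation and the vanishing of Fox derivatives of elements of $\Phi_n''$ under $\Z[\Psi_n]\twoheadrightarrow\alg$ is correct and is the standard way to see $N_{\Psi_n}\cong\alg^n$. For the converse, you have accurately isolated the one nontrivial step: deducing from $N_\Gamma\cong\alg^n$ that $\Gamma$ is $n$-generated. You are right that this is not a Nakayama-type lift (the kernel of $\Z[\Gamma]\to\alg$ is not in the Jacobson radical) and that it is precisely where Groves--Miller do the work; your proposal defers to them here, which is honest but means your sketch is not self-contained at exactly the point the paper is not self-contained either. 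The endgame you give --- choose a surjection $q\colon\Psi_n\twoheadrightarrow\Gamma$, observe that the induced $N_{\Psi_n}\to N_\Gamma$ is a surjection $\alg^n\twoheadrightarrow\alg^n$ and hence an isomorphism (surjective endomorphism of a finitely generated module over a commutative Noetherian ring is injective), then apply the five lemma to conclude $\ker q=0$ --- is correct; just note that calling the middle map an ``endomorphism'' requires first fixing isomorphisms $N_{\Psi_n}\cong\alg^n\cong N_\Gamma$, which is harmless.
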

Theorem~\ref{Thm::HomMet} will be essential in the proof of profinite rigidity for free metabelian groups. To apply it, we shall make use of the theory of profinite rigidity for modules over Noetherian domains developed by the author in \cite{Me}.
\subsection{Profinite Rigidity over Noetherian Domains}
Akin to the profinite rigidity of groups one may also ask the question of profinite rigidity for modules over a Noetherian domain $\Lambda$: to what extent are these objects determined by their finite quotients? Given a $\Lambda$-module $M$, one defines the $\Lambda$-profinite completion $\hM$ as the topological $\Lambda$-modules given by the inverse limit of the inverse system of $\Lambda$-epimorphic images of $M$. We then say that a $\Lambda$-module $M$ is $\Lambda$-\emph{profinitely rigid} if $\hM \cong \hN$ implies $M \cong N$ for any finitely generated residually finite $\Lambda$-module $N$. We refer the reader to \cite[Chapter 5]{RZ} for details on profinite modules and \cite{Me} for details on profinite rigidity over Noetherian domains. We shall work exclusively in settings where $\Lambda$ is finitely generated (as a $\Z$-algebra), in which case all finitely generated modules are automatically residually finite \cite[Lemma 2.1]{Me}. The following result plays a central role in the proof of Theorem~\ref{MainThm}. We say that a ring $\Lambda$ is \emph{homologically taut} if all finitely generated projective $\Lambda$-modules are free over $\Lambda$.

\begin{theorem}[Theorem C in \cite{Me}]\label{Thm::Epic}
    Let $\Lambda$ be a homologically taut finitely generated Noetherian domain. Finitely generated free $\Lambda$-modules are $\Lambda$-profinitely rigid.
\end{theorem}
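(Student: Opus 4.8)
\bigskip

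The plan is to reduce the claim, through the homological algebra of the $\Lambda$-profinite completion functor, to the rigidity of projectives under the hypothesis of homological tautness. Fix $k \ge 0$ and let $N$ be a finitely generated --- hence residually finite, by \cite[Lemma 2.1]{Me} --- $\Lambda$-module with $\widehat{N} \cong \widehat{\Lambda}^{k}$ as topological $\Lambda$-modules; since $\Lambda$ is dense in $\widehat{\Lambda}$ this is automatically an isomorphism of $\widehat{\Lambda}$-modules. As $\Lambda$ is Noetherian, $N$ is finitely presented, so I may fix a short exact sequence $\varepsilon \colon 0 \to K \to \Lambda^{d} \to N \to 0$ with $K$ finitely generated. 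It suffices to show that $\varepsilon$ splits: then $N$ is a direct summand of the free module $\Lambda^{d}$, hence finitely generated and projective, hence free by homological tautness, say $N \cong \Lambda^{r}$; and finally $r = k$, because $\widehat{\Lambda}$ is a nonzero commutative ring --- nonzero since the finitely generated domain $\Lambda$ admits a finite residue field by the Nullstellensatz, onto which $\widehat{\Lambda}$ surjects --- so that invariant basis number applies to $\widehat{\Lambda}^{r} \cong \widehat{N} \cong \widehat{\Lambda}^{k}$.

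To split $\varepsilon$ I would work with its class $[\varepsilon] \in \Ext^{1}_{\Lambda}(N,K)$. This group is finitely generated over the Noetherian ring $\Lambda$, hence residually finite, hence embeds into its own $\Lambda$-profinite completion; so it is enough to prove that $[\varepsilon]$ vanishes there. Here I would invoke the comparison theory for $\Lambda$-profinite completions from \cite{Me}: on finitely generated modules the completion functor is exact and is computed by $M \mapsto M \otimes_{\Lambda} \widehat{\Lambda}$, with $\widehat{\Lambda}$ flat over $\Lambda$. Flatness together with the finite presentation of $N$ then yields a base-change isomorphism $\Ext^{1}_{\Lambda}(N,K) \otimes_{\Lambda} \widehat{\Lambda} \cong \Ext^{1}_{\widehat{\Lambda}}(\widehat{N}, \widehat{K})$ --- the left-hand side being the completion of $\Ext^{1}_{\Lambda}(N,K)$ --- under which $[\varepsilon]$ is carried to the class of the completed extension $\varepsilon \otimes_{\Lambda} \widehat{\Lambda}$. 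But $\widehat{N} \cong \widehat{\Lambda}^{k}$ is projective over $\widehat{\Lambda}$, so $\Ext^{1}_{\widehat{\Lambda}}(\widehat{N}, \widehat{K}) = 0$; thus $[\varepsilon]$ maps to $0$ in the completion of $\Ext^{1}_{\Lambda}(N,K)$, and by residual finiteness $[\varepsilon] = 0$, so $\varepsilon$ splits.

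The step I expect to be the main obstacle is precisely the comparison input quoted above: that the $\Lambda$-profinite completion of a finitely generated module is the tensor product with the ring completion $\widehat{\Lambda}$, and that $\widehat{\Lambda}$ is flat over $\Lambda$. Flatness is what drives both the exactness used to pass $\varepsilon$ to its completion and the $\Ext$ base-change; and since the full profinite completion is not an $\mathfrak{a}$-adic completion, pinning it down uses the Noetherian hypothesis in an essential way (via Artin--Rees and the fact that a product of flat modules over a coherent ring is flat). I would import this from \cite{Me}; granting it, the remaining ingredients --- finite presentation over a Noetherian ring, the extension-class splitting criterion, residual finiteness of finitely generated $\Lambda$-modules, and invariance of rank over the nonzero commutative ring $\widehat{\Lambda}$ --- are all routine.
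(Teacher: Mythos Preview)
The paper does not prove this statement: Theorem~\ref{Thm::Epic} is quoted verbatim as Corollary~C of \cite{Me} and is used as a black box in the proof of Theorem~\ref{MainThm}. There is therefore no proof in the present paper to compare your proposal against.

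That said, your argument is a coherent outline of how one might establish the result, and you have correctly isolated the two nontrivial analytic inputs that must come from \cite{Me}: (i) that for finitely generated $M$ the $\Lambda$-profinite completion agrees with $M \otimes_\Lambda \widehat{\Lambda}$, and (ii) that $\widehat{\Lambda}$ is flat over $\Lambda$. Granting these, the rest is sound: flat base change of $\Ext^1$ for the finitely presented module $N$, vanishing of $\Ext^1_{\widehat\Lambda}(\widehat N,-)$ because $\widehat N$ is free, residual finiteness of the finitely generated $\Lambda$-module $\Ext^1_\Lambda(N,K)$ forcing $[\varepsilon]=0$, and then homological tautness upgrading projectivity to freeness. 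The rank comparison via IBN over the nonzero ring $\widehat\Lambda$ is also fine. Since the substance lies entirely in (i) and (ii), your write-up is really a reduction to \cite{Me} rather than an independent proof; whether it matches the actual argument of Corollary~C there cannot be judged from the present paper.
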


The question whether the polynomial ring $\Z[x_1, \ldots, x_n]$ is homologically taut was asked by Serre in 1955 and resolved in 1976 independently by Quillen \cite{quillen} and Suslin \cite{suslin}. We shall require the following localised formulation of this celebrated result, presently known as the Quillen--Suslin theorem.

\begin{theorem}[Corollary 7.4 in \cite{suslin}]\label{Thm::Suslin}
    The domain of integral Laurent polynomials in $n$ variables $\Z[x_1^\pm, \ldots, x_n^\pm]$ is homologically taut for any $n \in \mathbb{N}$.
\end{theorem}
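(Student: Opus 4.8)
The plan is to establish the stronger assertion that every finitely generated projective module over the Laurent polynomial domain $L_n = \Z[x_1^\pm,\dots,x_n^\pm]$ is \emph{extended} from $\Z$; since finitely generated projective $\Z$-modules are free, homological tautness of $L_n$ follows at once. I would argue by induction on the number $n$ of variables, using the presentation $L_n = L_{n-1}[x_n, x_n^{-1}]$ as a Laurent extension of the regular Noetherian domain $L_{n-1}$, whose homological tautness — indeed, the stronger ``extended from $\Z$'' property — is supplied by the inductive hypothesis.

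For the base case I would note that there is nothing to prove when $n = 0$, while the case $n = 1$, that finitely generated projective modules over $\Z[t, t^{-1}]$ are free, is classical. For the inductive step, the first task is a Laurent analogue of Quillen's local--global principle: a finitely generated projective module $P$ over $R[t, t^{-1}]$ is extended from $R$ provided each localisation $P_{\mathfrak{m}}$ is extended from $R_{\mathfrak{m}}$ for every maximal ideal $\mathfrak{m}$ of $R$. This is proved by Quillen's patching technique, carried over essentially verbatim once $R[t]$ is replaced by $R[t, t^{-1}]$ throughout; applied with $R = L_{n-1}$ it reduces the problem to the case of a regular local base ring $A = (L_{n-1})_{\mathfrak{m}}$. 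The remaining task is then to show that every finitely generated projective $A[t, t^{-1}]$-module is free. Here I would invoke Horrocks' theorem over the two polynomial subrings $A[t]$ and $A[t^{-1}]$, together with the regularity of $A$ (hence the triviality of the relevant $K$-groups), to trivialise the patching datum of such a module over $A[t, t^{-1}]$ and conclude freeness. Chaining the inductive conclusions, projective $L_n$-modules are extended from $\Z$, hence free.

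I expect the Horrocks-type local statement to be the main obstacle. Unlike the situation for the Quillen--Suslin theorem over $\Z[x_1,\dots,x_n]$, the ring $A[t,t^{-1}]$ is not a polynomial ring over a local ring, so Horrocks' theorem does not apply off the shelf; one must instead combine the two one-sided Horrocks conclusions over $A[t]$ and $A[t^{-1}]$ and show that the resulting descent datum over $A[t,t^{-1}]$ is trivial — and it is exactly at this point that regularity of the base, and hence the full strength of the inductive hypothesis on $L_{n-1}$, is needed. An alternative route through the Bass--Heller--Swan fundamental theorem, which yields $\widetilde{K}_0(L_n) \cong \widetilde{K}_0(\Z) = 0$ and hence that all finitely generated projective $L_n$-modules are stably free, still leaves the passage from stably free to free, which requires Suslin's unimodular-row techniques; thus it consolidates, rather than removes, the difficulty.
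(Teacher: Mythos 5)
The paper does not actually prove this statement; it is cited verbatim as Corollary~7.4 of Suslin's paper and treated as a black box, which is entirely reasonable for a deep and well-known theorem. Your proposal instead sketches a proof of the theorem itself, and the overall skeleton---induction on $n$ via a Laurent analogue of Quillen's patching theorem together with a local Horrocks-type theorem---is indeed the shape of the standard treatment (cf.\ Lam, \emph{Serre's Conjecture}). So there is no proof in the paper to compare against, and I will assess the sketch on its own terms.

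The sketch leaves open precisely the step you flag as the main obstacle, and the route you propose for it would not get off the ground as described. A finitely generated projective $A[t,t^{-1}]$-module is not handed to you as the common restriction of projective modules over $A[t]$ and over $A[t^{-1}]$, so there are no ``one-sided Horrocks conclusions'' available to combine: producing such extensions across $t=0$ and $t=\infty$ is itself the heart of the matter, and the affine Horrocks theorem simply has no input here. What the literature proves instead is a genuine \emph{Laurent Horrocks} (monic inversion) theorem: for $A$ local, a finitely generated projective $A[t,t^{-1}]$-module that becomes free after inverting a doubly monic Laurent polynomial is free. This is established by a patching argument over $\mathbb{P}^1_A$ and is comparable in length and depth to the polynomial-ring Horrocks theorem, not a corollary of two applications of it. You also misattribute where the inductive hypothesis enters: regularity of $L_{n-1}$, and hence vanishing of the relevant negative $K$-groups, holds unconditionally (it is a localization of a polynomial ring over the regular ring $\Z$), independent of any induction; the inductive hypothesis is instead needed to promote ``$P$ is extended from $L_{n-1}$'' to ``$P$ is free''. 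None of this affects the paper, which correctly cites rather than re-proves Suslin's theorem.
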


\section{Homological Separability}\label{Sec::SepCoh}
In this section, we develop some of the homological methods necessary for the proof of Theorem~\ref{MainThm}. We shall require the machinery of homological algebra for profinite modules; we refer the reader to \cite[Chapter 6]{RZ} or \cite[Chapter 6]{Gareth_Book} for details on the homology and cohomology of profinite groups. Recall that a group $\Upsilon$ \emph{has separable cohomology} or \emph{is cohomologically good} if the profinite completion map $\iota \colon \Upsilon \to \widehat{\Upsilon}$ induces an isomorphism on cohomology
\[
\iota^\ast \colon H^\ast(\widehat \Upsilon, M) \to H^\ast(\Upsilon,M)
\]
whenever $M$ is a finite $\Upsilon$-module. Finitely generated free abelian groups have separable cohomology by the conjunction of \cite[Proposition 7.2.3]{Gareth_Book} and \cite[Proposition 7.3.6]{Gareth_Book}. More generally, we have the following equivalent characterisation of separable cohomology for a group of type $\operatorname{FP}_\infty$.

\begin{proposition}[Proposition 3.14 in \cite{Wilkes_SF}]\label{Prop::CohSep}
    A group $\Upsilon$ of type $\operatorname{FP}_\infty$ has separable cohomology if and only if any free resolution of the trivial $\alg$-module $\Z$
    \[
     \begin{tikzcd}
        \ldots \arrow[r] & {\alg^{k_2}} \arrow[r, "d_2"] & {\alg^{k_1}} \arrow[r, "d_1"] & {\alg} \arrow[r, "d_0"] & \Z \arrow[r] & 0
    \end{tikzcd}
    \]
    induces an exact sequence of profinite $\palg$-modules
    \[
    \begin{tikzcd}
        \ldots \arrow[r] & {\palg^{k_2}} \arrow[r, "d_2"] & {\palg^{k_1}} \arrow[r, "d_1"] & {\palg} \arrow[r, "d_0"]& \pZ \arrow[r] & 0
    \end{tikzcd}
    \]
    via the profinite completion functor in the category of $\alg$-modules. 
\end{proposition}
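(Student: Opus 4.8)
\emph{Proof plan.} I would begin by fixing a free resolution $P_\bullet \to \Z$ of the trivial $\alg$-module as in the statement, with $P_i = \alg^{k_i}$; by the comparison theorem for projective resolutions the conclusion will not depend on this choice. Applying the completion functor yields the complex $\widehat P_\bullet$ of free profinite $\palg$-modules $\widehat P_i = \palg^{k_i}$, together with the augmentation $\widehat P_0 \to \pZ$ obtained from $P_0 \to \Z$; since completion is right exact one already has $H_0(\widehat P_\bullet) = \pZ$, so the assertion is exactly the vanishing of $H_i(\widehat P_\bullet)$ for $i \geq 1$. The computation underpinning everything is the adjunction isomorphism of cochain complexes
\[
\operatorname{Hom}^{\mathrm{cts}}_{\palg}(\widehat P_\bullet, M) \;\xrightarrow{\ \sim\ }\; \operatorname{Hom}_{\alg}(P_\bullet, M),
\]
valid for every finite $\palg$-module $M$ and induced by the completion chain map $P_\bullet \to \widehat P_\bullet$; degreewise it is the identity of $M^{k_i}$, since $\operatorname{Hom}^{\mathrm{cts}}_{\palg}(\palg^{k},M) = M^{k} = \operatorname{Hom}_{\alg}(\alg^{k},M)$. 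Passing to cohomology gives $H^\bullet(\operatorname{Hom}^{\mathrm{cts}}_{\palg}(\widehat P_\bullet, M)) \cong H^\bullet(\Upsilon, M)$ canonically and with no hypotheses on $\Upsilon$.

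Granting this, the backward implication is immediate: if $\widehat P_\bullet \to \pZ$ is exact it is a free resolution of $\pZ$ in profinite $\palg$-modules, hence computes $H^\bullet(\widehat\Upsilon, M)$ for finite $M$ (\cite[Chapter 6]{RZ}), and combining this with the display above — after checking that the induced identification is the comparison map $\iota^\ast$ — exhibits $\iota^\ast$ as an isomorphism, i.e. $\Upsilon$ has separable cohomology. For the forward implication I would choose a projective resolution $Q_\bullet \to \pZ$ in the category of profinite $\palg$-modules, lift $\operatorname{id}_{\pZ}$ to a chain map $f \colon \widehat P_\bullet \to Q_\bullet$ using projectivity of the $\widehat P_i$, and observe that $\iota^\ast$ is the composite of $f^\ast$ with the above adjunction isomorphism on cohomology. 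Separable cohomology then forces $f^\ast$ to be an isomorphism on $H^\bullet(\operatorname{Hom}^{\mathrm{cts}}_{\palg}(-, M))$ for every finite $M$; equivalently the mapping cone $C_\bullet = \operatorname{cone}(f)$ — a bounded-below complex of projective profinite $\palg$-modules — has $\operatorname{Hom}^{\mathrm{cts}}_{\palg}(C_\bullet, F)$ exact for every finite $F$. If I can deduce that $C_\bullet$ is itself exact, then $f$ is a quasi-isomorphism, so $\widehat P_\bullet$ inherits the homology of $Q_\bullet$, namely $\pZ$ in degree $0$ and nothing above — precisely the required exactness of $\widehat P_\bullet \to \pZ$.

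The crux is therefore the lemma that a bounded-below complex $C_\bullet$ of projective profinite $\palg$-modules, all of whose complexes $\operatorname{Hom}^{\mathrm{cts}}_{\palg}(C_\bullet, F)$ (over finite $F$) are exact, is itself exact. I would prove this by induction on $n$, showing $H_n(C_\bullet) = 0$ (arranged so that $C_\bullet$ is supported in degrees $\geq 0$). Images of continuous maps of profinite modules are closed, so $H_n(C_\bullet)$ is again profinite; if it were nonzero, residual finiteness of profinite modules would supply a nonzero finite quotient $F$. Using the inductive hypothesis that $C_\bullet$ is exact below degree $n$, together with repeated splitting of surjections onto projective objects, one writes $C_n \cong Z_n \oplus Z_{n-1}$ with $Z_n = \ker d_n$ and $Z_{n-1} = \operatorname{im} d_n$ both projective. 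The composite $C_n \twoheadrightarrow Z_n \twoheadrightarrow H_n(C_\bullet) \twoheadrightarrow F$, extended by zero over $Z_{n-1}$, is then a cocycle in $\operatorname{Hom}^{\mathrm{cts}}_{\palg}(C_\bullet, F)$ which is not a coboundary, because a coboundary in that degree factors through the surjection $C_n \twoheadrightarrow Z_{n-1}$ and hence vanishes on $Z_n$. This contradicts exactness of $\operatorname{Hom}^{\mathrm{cts}}_{\palg}(C_\bullet, F)$ and closes the induction.

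I expect this lemma, and more broadly the homological bookkeeping in the category of \emph{profinite} $\palg$-modules, to be the main obstacle: the argument relies on this category being abelian with enough projectives, on morphisms having closed image, on nonzero profinite modules admitting nonzero finite quotients, and on surjections onto projective objects splitting — each standard, but each a place where a naive import of the discrete proof could fail. A secondary point requiring care is making precise, in the forward direction, that the comparison map $\iota^\ast$ on cohomology really is the one induced by the chain map $P_\bullet \to \widehat P_\bullet \to Q_\bullet$, so that separability of cohomology can be fed in as a statement about $f^\ast$.
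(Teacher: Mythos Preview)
The paper does not supply its own proof of this proposition: it is quoted verbatim as Proposition~3.1 of \cite{Jaikin1} and used thereafter as a black box, so there is nothing in the paper to compare your argument against.

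That said, your outline is a correct and standard route to the result. The identification $\operatorname{Hom}^{\mathrm{cts}}_{\palg}(\widehat P_\bullet,M)\cong\operatorname{Hom}_{\alg}(P_\bullet,M)$ for finite $M$ is the heart of the matter, the mapping-cone reduction is the natural way to organise the forward direction, and your detection lemma for bounded-below complexes of projectives via finite coefficients is proved exactly as you sketch (the inductive splitting $C_n\cong Z_n\oplus Z_{n-1}$ works because each $Z_{i}$ is a summand of $C_i$ once exactness below is known). The foundational inputs you flag---that profinite $\palg$-modules form an abelian category with enough projectives, that continuous maps have closed image, and that nonzero profinite modules admit nonzero finite quotients---are all in \cite[Chapters~5--6]{RZ}. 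Two small points to tighten: first, the statement's ``any free resolution'' should be read as ``any resolution by finitely generated free modules'' (implicit in the notation $\alg^{k_i}$ and the $\operatorname{FP}_\infty$ hypothesis), since only then does completion send $\alg^{k_i}$ to $\palg^{k_i}$ and your chain-homotopy argument for independence of the resolution go through; second, the identification of your composite $P_\bullet\to\widehat P_\bullet\to Q_\bullet$ with the map inducing $\iota^\ast$ is, as you note, a routine but necessary check.
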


Typically, one defines the cohomology of a profinite group $G$ only with coefficients in discrete modules, as the category of topological modules may not have enough injectives. However, for a profinite group $G$ of type $\operatorname{FP}_\infty$, the trivial $\pZ$-module admits a resolution by finitely generated free modules, and hence it is indeed possible to define a cohomology theory of $G$ with coefficients in profinite modules \cite[pp. 379]{Symonds_Weigel}. Given a discrete group $\Upsilon$ of type $\operatorname{FP}_\infty$ with separable cohomology, its profinite completion $\widehat{\Upsilon}$ must be of type $\operatorname{FP}_\infty$ (as a profinite group) by \cite[Corollary 7.3.13]{Gareth_Book}. The following result then allows us to relate the cohomology of a group $\Upsilon$ of type $\operatorname{FP}_\infty$ with separable cohomology to the cohomology of the profinite completion $\widehat{\Upsilon}$.

\begin{proposition}\label{Prop::SepCoh}
    Let $\Upsilon$ be a group of type $\operatorname{FP}_\infty$ with separable cohomology and let $\iota \colon \Upsilon \to \widehat{\Upsilon}$ be its profinite completion. For any $\Z[\Upsilon]$-module $M$, there exists a homomorphism of graded abelian groups
    \[
    \ioteda^\ast \colon H^\ast(\Upsilon,M) \to H^\ast(\widehat{\Upsilon}, \hM)
    \]
    fitting into a commutative diagram
    \begin{equation}\label{Eq::MapOnCohCom}
    \begin{tikzcd}
{H^\ast(\Upsilon,M)} \arrow[r, "\iotedasmall^\ast"] \arrow[d, "\varpi_\ast", two heads] & {H^\ast(\widehat{\Upsilon},\widehat{M})} \arrow[d, "\varpi_\ast", two heads] \\
{H^\ast(\Upsilon,Q)}                                                   & {H^\ast(\widehat\Upsilon,Q)} \arrow[l, "\iota^\ast"']              
\end{tikzcd}
     \end{equation}
    whenever $\varpi \colon M \twoheadrightarrow Q$ is an epimorphism of $\Z[\Upsilon]$-modules and $Q$ is finite.
\end{proposition}

\begin{proof} Let $\Upsilon$ be a group of type $\operatorname{FP}_\infty$ with separable cohomology and choose a finitely generated free resolution
   \begin{equation}\label{Eq::Resolution}
       \begin{tikzcd}
        \ldots \arrow[r] & {\Z[\Upsilon]^{k_2}} \arrow[r, "d_2"] & {\Z[\Upsilon]^{k_1}} \arrow[r, "d_1"] & {\Z[\Upsilon]} \arrow[r, "d_0"] & \Z \arrow[r] & 0
    \end{tikzcd}
   \end{equation}
   of the trivial $\Z[\Upsilon]$-module $\Z$. By \cite[Proposition 3.1]{Jaikin1}, the resolution (\ref{Eq::Resolution}) induces an exact sequence of profinite $\Z[\Upsilon]$-modules via the profinite completion map, so we obtain a commutative diagram 
   \begin{equation}\label{Eq::CommutativeResolutions1}
   \begin{tikzcd}
\ldots \arrow[r] & {\pZ[\![\widehat\Upsilon]\!]^{k_2}} \arrow[r, "\widehat{d_2}"] & {\pZ[\![\widehat\Upsilon]\!]^{k_1}} \arrow[r, "\widehat{d_1}"] & {\pZ[\![\widehat\Upsilon]\!]} \arrow[r, "\widehat{d_0}"] & \pZ \arrow[r]          & 0 \\
\ldots \arrow[r] & {\Z[\Upsilon]^{k_2}} \arrow[r, "d_2"] \arrow[u]            & {\Z[\Upsilon]^{k_1}} \arrow[r, "d_1"] \arrow[u]            & {\Z[\Upsilon]} \arrow[r, "d_0"] \arrow[u]            & \Z \arrow[r] \arrow[u] & 0
\end{tikzcd}
   \end{equation}
  whose rows are exact and whose vertical arrows agree with the canonical profinite completion morphisms of the free $\Z[\Upsilon]$-modules. Now, suppose that $M$ is a $\Z[\Upsilon]$-module. By the universal property of profinite completions, any morphism of $\Z[\Upsilon]$-modules $\zeta \colon \Z[\Upsilon]^{k_i} \to M$ induces a continuous morphism of $\Z[\Upsilon]$-profinite completions $\widehat\zeta \colon \pZ[\![\widehat\Upsilon]\!]^{k_i} \to \hM$ making the diagram
  \[
    \begin{tikzcd}
        {\pZ[\![\widehat{\Upsilon}]\!]^{k_i}} \arrow[r, "\widehat\zeta"]                & \hM                    \\
        {\Z[\Upsilon]^{k_i}} \arrow[u, "{\iota_{\Z[\Upsilon]}}"] \arrow[r, "\zeta"] & M \arrow[u, "\iota_M"]
    \end{tikzcd}
    \]
    commute. Hence we obtain a homomorphism
   \[
   \chi^\bullet_M \colon \operatorname{Hom}_{\Z[\Upsilon]}(\Z[\Upsilon]^\bullet, M) \to \operatorname{Hom}_{\pZ[\![\widehat\Upsilon]\!]}(\pZ[\![\widehat{\Upsilon}]\!]^\bullet, \widehat{M})
   \]
    which forms a chain map owing to the commutativity of (\ref{Eq::CommutativeResolutions1}). Thus $\chi^\bullet$ induces a map on cohomology
   \[
   \ioteda^\ast \colon H^\ast(\Upsilon, M) \to H^\ast(\widehat{\Upsilon}, \widehat M)
   \] which forms a morphism of graded abelian groups. Moreover, given a finite $\Z[\Upsilon]$-epimorphic image $\varpi \colon M \to Q$ of the $\Z[\Upsilon]$-module $M$, the profinite completion map $\iota \colon \Upsilon \to \widehat{\Upsilon}$ induces an isomorphism on cohomology
   \[
   \iota^\ast \colon H^\ast(\widehat \Upsilon, Q) \to H^\ast(\Upsilon, Q)
   \]
   via the assumption that $\Upsilon$ is cohomologically separable. We then find that for any class $[\zeta] \in H^*(\Upsilon,M)$, the equation
   \[
   \iota^\ast \varpi_\ast \ioteda^\ast([\zeta]) = [\varpi \circ \widehat\zeta \circ \iota] = \varpi_\ast[\zeta]
   \] holds. We conclude that the composition $\iota^\ast \varpi_\ast \ioteda^\ast$ is equal to the induced map $\varpi_\ast$ and the diagram (\ref{Eq::MapOnCohCom}) commutes. 
\end{proof}
In dimension two, the cohomology group $H^2(\Upsilon, M)$ is known to classify extensions of $\Upsilon$ by $M$, see e.g. \cite[Section 5.5]{Gareth_Book}. The analogous statement for a profinite group $G$ remains true whenever $H^2(G,M)$ can be suitably defined, such as when $M$ is a finite module (cf. \cite[Section 5.5]{Gareth_Book}). We shall establish this correspondence also for profinite modules $M$, in the case where $G$ is the profinite completion of a discrete group of type $\operatorname{FP}_\infty$ with separable cohomology. In that case, the correspondence commutes naturally with the map $\ioteda$ constructed in Prop~\ref{Prop::SepCoh}.

Given a profinite group $G$ and a profinite $\Z[\![G]\!]$-module $A$, we shall condsider the set $\operatorname{Ext}_\mathrm{ProfGrp}(G,A)$ given by extensions of profinite groups
\[
1 \to A \xrightarrow{j} E \xrightarrow{p} G \to 1
\]
modulo the equivalence relation whereby extensions $E_1$ and $E_2$ are equivalent whenever there is an isomorphism of profinite groups $\varphi \colon E_1 \xrightarrow{\sim} E_2$ such that the induced map $\varphi j_1 \colon A \to E_2$ factors through the inclusion $j_2 \colon A \to E_2$ and the induced map $p_2 \varphi \colon  E_1 \to G$ factors through the quotient $p_1 \colon E_1 \twoheadrightarrow G$. We then obtain:

\begin{lemma}\label{Lem::Extensions}
    Let $\Upsilon$ be a group of type $\operatorname{FP}_\infty$ with separable cohomology and $\widehat{\Upsilon}$ be its profinite completion. There exists a bijective correspondence
    \[
    H^2(\widehat\Upsilon, A) \longleftrightarrow \operatorname{Ext}_{\mathrm{ProfGrp}}(\widehat \Upsilon, A)
    \]
    whenever $A$ is a finitely generated profinite $\pZ[\![\widehat\Upsilon]\!]$-module. Moreover, if $M$ is a finitely generated discrete $\Z[\Upsilon]$-module and $[\zeta] \in H^2(\Upsilon,M)$ corresponds to an extension of groups 
    $
    1 \to M \to \Gamma \to \Upsilon \to 1
    $
    then $\ioteda^2([\zeta])$ corresponds to the extension of profinite groups
    $
    1 \to \overline{M} \to \widehat \Gamma \to \widehat \Upsilon \to 1
    $
    induced via profinite completion.
\end{lemma}
\begin{proof}
    Let $\Upsilon$ be a group of type $\operatorname{FP}_\infty$ with separable cohomology and $A$ be a profinite $\pZ[\![\widehat\Upsilon]\!]$-module. By \cite[Corollary 7.3.13]{Gareth_Book}, the profinite completion $\widehat{\Upsilon}$ has type $\operatorname{FP}_\infty$ as a profinite group. Furthermore, \cite[Theorem 6.2.4]{RZ} says that the cohomology groups of $\widehat \Upsilon$ with coefficients in the topological module $A$ coincide with the homology groups of homogeneous cochains that are continuous with respect to the profinite topology on $\widehat \Upsilon$ and $A$. It then follows via the conjunction of \cite[Theorem 5.3]{TopExt} and \cite[Proposition 2.2.2]{RZ} that there exists a bijective correspondence
    \[
    H^2(\widehat\Upsilon, A) \longleftrightarrow \operatorname{Ext}_{\mathrm{ProfGrp}}(\widehat \Upsilon, A)
    \]
    as postulated. If $M$ is a finitely generated $\Z[\Upsilon]$-module and $[\zeta] \in H^2(\Upsilon,M)$ is a class corresponding to an extension of groups 
    $
    1 \to M \to \Gamma \to \Upsilon \to 1
    $
    then the profinite completion functor induces an extension of profinite groups
    \begin{equation}\label{Eq::ProfExt}
    1 \to \overline{M} \to \widehat \Gamma \to \widehat{\Upsilon} \to 1
    \end{equation}
    which is exact by \cite[Theorem 1.3.17]{Gareth_Book}. The closed abelian subgroup $\overline{M} \trianglelefteq \widehat \Gamma$ then acquires the structure of a profinite $\alg$-module via $\widehat{\Gamma}$-conjugacy, and by \cite[Lemma 5.1]{Me}, this structure is isomorphic to the profinite $\alg$-module given by the $\Z[\Upsilon]$-profinite completion $\hM$ of $M$. Hence the extension (\ref{Eq::ProfExt}) corresponds to a class $[z] \in H^2(\widehat{\Upsilon},\widehat{M})$ such that $z$ restricts to an element of $[\zeta]$ on the dense submodule of the finitely generated free module appearing in dimension two of the resolution (\ref{Eq::CommutativeResolutions1}). Proposition~\ref{Prop::SepCoh} then yields $\ioteda^2([\zeta]) = [z]$, as postulated.
\end{proof}

Similarly, given a profinite $\pZ[\![G]\!]$-module $A$ of type $\operatorname{FP}_\infty$, one may define the appropriate derived functor $\operatorname{Ext}_{\pZ[\![G]\!]}^\ast(\paug,-)$, where $\paug$ is the augmentation ideal of the profinite group $\widehat \Upsilon$ and the second coordinate takes inputs in the category of profinite $\palg$-modules, see \cite[pp. 377]{Symonds_Weigel}. Since the latter category is abelian \cite[Theorem 6.1.2]{Gareth_Book}, the group $\operatorname{Ext}^1_{\pZ[\![G]\!]}(A,B)$ classifies extensions of $B$ by $A$ in the category of profinite ${\pZ[\![G]\!]}$-modules \cite{Yoneda}. In the case where $G = \widehat{\Upsilon}$, we obtain the following result connecting the discrete and profinite extension groups. Implicit in its statement is the fact that $\widehat\aug \cong \paug$, as per \cite[Lemma 6.3.2]{RZ}.

\begin{proposition}\label{Prop::Big}
    Let $\Upsilon$ be a group of type $\operatorname{FP}_\infty$ with separable cohomology and $\widehat{\Upsilon}$ be its profinite completion. Write $\aug$ and $\paug$ to denote the augmentation ideals of $\Upsilon$ and $\widehat{\Upsilon}$, respectively. For any $\Z[\Upsilon]$-module $M$, there exists a homomorphism of graded abelian groups
    \[
    \iotede^\ast \colon \Ext^\ast_\alg(\aug,M) \to \Ext^\ast_\palg(\paug,\hM)
    \]
   fitting into the commutative diagram
   \begin{equation}\label{Eq::Naturality}
    \begin{tikzcd}
{\Ext^\ast_\palg(\paug, \hM)} \arrow[r, "\sim"] & {H^{\ast+1}(\widehat{\Upsilon}, \hM)} \\
{\Ext^\ast_\alg(\aug, M)} \arrow[u, "\iotedesmall"] \arrow[r, "\sim"]        & {H^{\ast+1}(\Upsilon, M)} \arrow[u, "\iotedasmall"]
\end{tikzcd}
 \end{equation}
    where the horizontal isomorphisms are the boundary maps in the cohomology long exact sequence associated to the short exact sequence of the augmentation ideal. Moreover, if $[\zeta] \in \Ext^1(\aug, M)$ corresponds to an extension $E$ of the $\alg$-module $\aug$ by the $\alg$-module $M$, then $\iotede^1([\zeta])$ corresponds to the extension $\widehat{E}$ of the profinite $\palg$-module $\paug$ by the profinite $\palg$-module $\hM$ induced by the $\alg$-profinite completion functor.
\end{proposition}
\begin{proof}
    Consider again the the resolution (\ref{Eq::Resolution}), which we may cap at the kernel of the map $d_0$ to obtain a free resolution of the augmentaion ideal $\mathcal{I}_\Upsilon$ of $\Upsilon$ of the form
   \begin{equation}\label{Eq::Resolution2}
       \begin{tikzcd}
        \ldots \arrow[r] & {\alg^{k_2}} \arrow[r, "d_2"] & {\alg^{k_1}} \arrow[r, "d_1"] & {\aug} \arrow[r] & 0
    \end{tikzcd}
   \end{equation}
   where the last map is a surjection by exactness of (\ref{Eq::Resolution}). Proceeding exactly as in the proof of Proposition~\ref{Prop::SepCoh}, we obtain a homomorphism
   \[
   \chi^\bullet \colon \operatorname{Hom}_{\alg}(\alg^\bullet, M) \to \operatorname{Hom}_{\pZ[\![\widehat\Upsilon]\!]}(\pZ[\![\widehat{\Upsilon}]\!]^\bullet, \widehat{M})
   \]
    whose restriction to the capped resolution (\ref{Eq::Resolution2}) yields a map on cohomology
   \[
   \iotede^\ast \colon \Ext^\bullet_{\Z[\Upsilon]}(\aug, M) \to \Ext^\bullet_{\palg}(\paug,\hM)
   \]
   which forms a morphism of graded abelian groups. As $\ioteda^\ast$ and $\iotede^\ast$ are constructed naturally with respect to the short exact sequence of $\alg$-modules $0 \to \aug \to \alg \to \Z \to 0$, the diagram (\ref{Eq::Naturality}) must commute.
   
   For the second statement, choose an extension $E$ of $\alg$-modules and and let $[\zeta] \in \Ext^1_{\alg}(\aug, M)$ be the corresponding extension class, so that $\zeta$ may be realised as a restriction $\zeta = \varphi\at{M}$ where $\varphi$ is a choice of projective lift fitting into the commutative diagram
   \[
\begin{tikzcd}
0 \arrow[r] & \operatorname{Ker}(d_1) \arrow[r] \arrow[d, dotted, "\zeta"] & \alg^{k_1} \arrow[r, "d_1"] \arrow[d, "\varphi", dotted] & \aug \arrow[d, equal] \arrow[r] & 0 \\
0 \arrow[r] & M \arrow[r]                                         & E \arrow[r]                                              & \aug \arrow[r]                                & 0
\end{tikzcd}
    \]
    as per \cite[pp. 91]{hilton_stammbach}. On the other hand, the extension $E$ induces an extension of profinite $\alg$-modules
    \begin{equation}\label{Eq::ExtProf}
        0 \to \hM \to \widehat E \to \paug \to 0
    \end{equation}
    which is exact by \cite[Lemma 5.1]{Me}. Equivalently, (\ref{Eq::ExtProf}) forms an extension of profinite modules over the profinite ring $\palg$, as in \cite[Lemma 2.2]{Me}. The category of profinite $\palg$-modules is abelian \cite[Theorem 6.1.2]{Gareth_Book} and the profinite augmentation ideal $\paug$ admits a resolution by finitely generated free modules given in Proposition~\ref{Prop::CohSep}. Consequently, the extension (\ref{Eq::ExtProf}) corresponds to a class $[z] \in \Ext^1_\palg(\paug, \hM)$ , where $z$ may be realised as a restriction $z = f\at{\hM}$ for some choice of continuous projective lift $f$ fitting into the commutative diagram
   \[
\begin{tikzcd}
0 \arrow[r] & \operatorname{Ker}(\widehat{d_1}) \arrow[r] \arrow[d, dotted, "z"] & \palg^{k_1} \arrow[r, "\widehat{d_1}"] \arrow[d, "f", dotted] & \paug \arrow[d, equal] \arrow[r] & 0 \\
0 \arrow[r] & \hM \arrow[r]                                         & \widehat E \arrow[r]                                              & \paug \arrow[r]                                & 0
\end{tikzcd}
    \]
    as per \cite{Yoneda} or \cite[Section 13.27]{stacks}. Now $z$ restricts to an element of the class $[\zeta]$ on the dense discrete submodule $\operatorname{Ker}(d_1)$ of $\operatorname{Ker}(\widehat{d_1})$, as does any choice of representative of $\iotede^1([\zeta])$. Hence $\iotede^1([\zeta]) = [z]$ and the proof is complete.
\end{proof}
Using Proposition~\ref{Prop::Big}, we are able at last to relate the $\alg$-profinite completion of the module $N_\Gamma = \alg \otimes_\alg \aug$ associated to a metabelian group (cf. Section~\ref{Sec::Metab}) to its analogue in the profinite category.
\begin{proposition}\label{Prop::Iso}
    Let $\Gamma$ be a finitely generated metabelian group and $\Upsilon$ its abelianisation. There is an isomorphism
    \[
    \widehat{N_\Gamma} = \widehat{\alg \otimes_{\Z[\Gamma]} \mathcal{I}_\Gamma} \cong \palg \widehat{\otimes}_{\pZ[\![\hG]\!]} \mathcal{I}_{\hG} = N_{\hG}
    \] as profinite $\palg$-modules.
\end{proposition}
\begin{proof}
Let $\Gamma$ be a finitely generated metabelian group and $\Upsilon$ its abelianisation. Consider the short exact sequence of groups (\ref{Eq::SESMG}) and let $[\zeta] \in H^2(\Upsilon,M_\Gamma)$ be the corresponding class. The boundary homomorphism in the long exact sequence induced by the derived functor $\Ext$ from the short exact sequence of the augmentation ideal $\aug$ maps the class $[\zeta]$ to the class $[\vartheta] \in \Ext^1(\aug,M)$ corresponding to the extension of $\alg$-modules (\ref{Eq::SESMM}), as per \cite[Exercise VI.6.1]{hilton_stammbach}. By Lemma~\ref{Lem::Extensions}, the class $\ioteda^2([\zeta])$ then corresponds to the extension of profinite groups
\begin{equation}\label{Eq::SESGpLem}
0 \to \overline{M_\Gamma} \to \hG \to \widehat \Upsilon \to 0
\end{equation}
induced by the profinite completion functor of groups. Moreover, the closure $\overline{M_\Gamma}$ of $M_\Gamma$ in $\hG$ acquires the structure of a profinite $\palg$-module via conjugacy in $\hG$, and this structure agrees with the $\alg$-profinite completion $\widehat{M_\Gamma}$ of $M_\Gamma$ by \cite[Lemma 5.1]{Me}. On the other hand, Proposition~\ref{Prop::Big} says that the class $\iotede^1([\vartheta])$ corresponds to the extension of profinite $\alg$-modules
\begin{equation}\label{Eq::SESGpMod}
0 \to \widehat{M_\Gamma} \to \widehat{N_\Gamma} \to \paug \to 0
\end{equation}
induced by the profinite completion functor of $\alg$-modules. Now the boundary homomorphism in the long exact sequence of profinite modules induced by the derived functor $\Ext$ from the short exact sequence of the augmentation ideal $\paug$ maps the class $\ioteda^2([\zeta])$ to the class in $\Ext^1_\palg(\paug, \widehat{M_\Gamma})$ corresponding to the extension of profinite $\palg$-modules
\begin{equation}\label{Eq::SESExtMod}
0 \to \widehat{M_\Gamma} \to N_{\hG} = \palg \widehat \otimes_{\pZ[\![\hG]\!]} \paug \to \paug \to 0
\end{equation}
as per \cite[Exercise VI.6.1]{hilton_stammbach}. But Proposition~\ref{Prop::Big} states that the image of $\ioteda^2([\zeta])$ under the boundary homomorphism is precisely $\iotede^1([\vartheta])$. Hence the extensions (\ref{Eq::SESGpMod}) and (\ref{Eq::SESExtMod}) are equivalent, and we obtain the postulated isomorphism $\widehat{N_\Gamma} \cong N_{\hG}$ as profinite $\palg$-modules.
\end{proof}

\section{The Proof of Theorem~\ref{MainThm}}\label{Sec::Main}
We prove that finitely generated free metabelian groups are profinitely rigid in the absolute sense. Let $\Gamma = \Psi_n$ be the free metabelian group of rank $n$ and denote by $\alpha_\Gamma \colon \Gamma \twoheadrightarrow \Upsilon \cong \Z^n$ its abelianisation. Let $\Delta$ be a finitely generated residually finite group admitting an isomorphism of profinite completions ${f \colon \hD \xrightarrow{\sim} \hG}$ as profinite groups. The abelianisation of $\Delta$ is then isomorphic to $\Upsilon \cong \Z^n$ by \cite[Proposition 3.2.10]{Gareth_Book} and the associated epimorphism $\alpha_\Delta \colon \Delta \twoheadrightarrow \Upsilon \cong \Z^n$ is unique up to multiplication by an element of $\operatorname{GL}_n(\Z)$. Write $M_\Gamma = \operatorname{Ker}(\alpha_\Gamma) \trianglelefteq \Gamma$ and $M_\Delta = \operatorname{Ker}(\alpha_\Delta) \trianglelefteq \Delta$ so that there are short exact sequences of groups
\begin{equation}\label{Eq::SESGps}
    0 \to M_\Gamma \to \Gamma \xrightarrow{\alpha_\Gamma}  \Upsilon \to 0 \qquad \text{and} \qquad 0 \to M_\Delta \to \Delta \xrightarrow{\alpha_\Delta} \Upsilon \to 0
\end{equation}
and $M_\Gamma$ is abelian since $\Gamma$ is metabelian. By the right exactness of the profinite completion functor \cite[Theorem 1.3.17]{Gareth_Book}, we then obtain two short exact sequences of profinite groups
\begin{equation}\label{Eq::SESGamma}
    0 \to \overline{M_\Gamma} \to \hG \xrightarrow{\widehat{\alpha_\Gamma}}  \widehat{\Upsilon} \to 0 \qquad \text{and} \qquad 0 \to  \overline{M_\Delta} \to \hD \xrightarrow{\widehat{\alpha_\Delta}} \widehat{\Upsilon} \to 0
\end{equation}
where $\overline{M_\Gamma}$ and $\overline{M_\Delta}$ denote the closures of the images of $M_\Gamma$ and $M_\Delta$ under the profinite completion maps $\iota_\Gamma \colon \Gamma \to \hG$ and $\iota_\Delta \colon \Delta \to \hD$, respectively. Now $\widehat{\alpha_\Gamma}$ and $\widehat{\alpha_\Delta}$ correspond to the abelianisation morphisms of the profinite groups $\hG$ and $\hD$, which are unique up to multiplication by an element of $\operatorname{GL}_n(\pZ)$. Thus we obtain a commutative diagram with exact rows
     \[
     \begin{tikzcd}
0 \arrow[r] & M_\Gamma \arrow[r] \arrow[d]                   & \Gamma \arrow[r, "\alpha_\Gamma"] \arrow[d, "\iota_\Gamma"]                             & \Upsilon \cong \Z^n \arrow[r] \arrow[d]                             & 0 \\
0 \arrow[r] & \overline{M_\Gamma} \arrow[r]                   & \widehat{\Gamma} \arrow[r, "\widehat{\alpha_\Gamma}"]                   & \widehat{\Upsilon} \cong \widehat{\Z}^n \arrow[r]                   & 0 \\
0 \arrow[r] & \overline{M_\Delta} \arrow[r] \arrow[u, dotted, "g"] & \widehat{\Delta} \arrow[r, "\widehat{\alpha_\Delta}"] \arrow[u, "f"'] & \widehat{\Upsilon} \cong \widehat{\Z}^n \arrow[r] \arrow[u, "A"'] & 0 \\
0 \arrow[r] & M_\Delta \arrow[r] \arrow[u]                   & \Delta \arrow[r, "\alpha_\Delta"] \arrow[u, "\iota_\Delta"']                                              & \Upsilon \cong \Z^n \arrow[r] \arrow[u]                             & 0
\end{tikzcd}
     \]
     for some $A \in \operatorname{GL}_n(\pZ)$. In particular, the profinite isomorphism $f$ restricts to an isomorphism of closed subgroups $g = f\at{\overline{M_\Delta}} \colon \overline{M_\Delta} \xrightarrow{\sim} \overline{M_\Gamma}$ wherefore $M_\Delta$ must be abelian as well. Now $\Gamma$ and $\Delta$ act by conjugation on $M_\Gamma$ and $M_\Delta$, respectively, and the restriction of this action to the respective abelian normal subgroups $M_\Gamma$ and $M_\Delta$ themselves must be trivial. Hence we obtain a conjugation action of $\Upsilon$ on $M_\Gamma$ and $M_\Delta$, which endows both groups with the structure of modules over the group algebra $\alg = \Z[x_1^\pm, \ldots, x_n^\pm]$. Similarly, the conjugation action within the profinite groups $\hG$ and $\hD$ endows the closed subgroups $\overline{M_\Gamma}$ and $\overline{M_\Delta}$ with a structure of modules over the completed group algebra $\palg$. The free abelian group $\Upsilon$ has separable cohomology (cf. Section~\ref{Sec::SepCoh}), so \cite[Lemma 5.1]{Me} yields the following.
    \begin{lemma}\label{Lem::ClosureVsCompletion}
         The profinite $\palg$-modules $\overline{M_\Gamma}$ and $\overline{M_\Delta}$ with $\widehat{\Upsilon}$-conjugacy module structures admit isomorphisms of $\palg$-modules $\overline{M_\Gamma} \xrightarrow{\sim}\widehat{M_\Gamma}$ and $\overline{M_\Delta} \xrightarrow{\sim}\widehat{M_\Delta}$, where $\widehat{M_\Gamma}$ and $\widehat{M_\Gamma}$ are the respective profinite completions of the $\alg$-modules $M_\Gamma$ and $M_\Delta$ with $\Upsilon$-conjugacy module structures.
     \end{lemma}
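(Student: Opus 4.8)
The plan is to obtain both isomorphisms as instances of \cite[Lemma 5.1]{Me}, applied in turn to the two group extensions $0 \to M_\Gamma \to \Gamma \to \Upsilon \to 0$ and $0 \to M_\Delta \to \Delta \to \Upsilon \to 0$ recorded in (\ref{Eq::SESGps}). That lemma takes as input an extension $0 \to M \to E \to \Upsilon \to 0$ in which $\Upsilon$ has separable cohomology and $M$ is an abelian normal subgroup that is finitely generated as a module over $\alg = \Z[\Upsilon]$ under the conjugation action, and it produces an isomorphism of $\palg$-modules between the closure of $M$ inside $\widehat E$ — carrying the $\widehat\Upsilon$-conjugacy structure it inherits as a closed subgroup of $\widehat E$ — and the $\alg$-profinite completion $\widehat M$. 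Feeding in $E = \Gamma$ gives $\overline{M_\Gamma} \xrightarrow{\sim} \widehat{M_\Gamma}$ and feeding in $E = \Delta$ gives $\overline{M_\Delta} \xrightarrow{\sim} \widehat{M_\Delta}$, which is precisely the content of the lemma.

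The work therefore reduces to checking the hypotheses of \cite[Lemma 5.1]{Me} in each case. The base group $\Upsilon \cong \Z^n$ has separable cohomology, as recalled in Section~\ref{Sec::SepCoh}. The subgroup $M_\Gamma = [\Gamma,\Gamma]$ is abelian by construction of $\Gamma = \Psi_n$, while $M_\Delta$ was shown to be abelian in the discussion preceding the lemma. Hence both $\Gamma$ and $\Delta$ are finitely generated metabelian groups, and by a classical theorem of P. Hall such groups satisfy the maximal condition on normal subgroups; since the $\alg$-submodules of an abelian normal subgroup are themselves normal subgroups of the ambient group, it follows that $M_\Gamma$ and $M_\Delta$ are finitely generated as modules over $\alg = \Z[\Z^n]$. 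With all hypotheses verified, the lemma follows.

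The single point that genuinely requires \cite[Lemma 5.1]{Me} rather than a bare diagram chase is the identification of the two module structures. A priori the profinite topology on $M_\Gamma$ inherited from the finite-index normal subgroups of $\Gamma$ need not agree with the topology defined by the finite-index $\alg$-submodules of $M_\Gamma$, so the closure $\overline{M_\Gamma} \leq \widehat\Gamma$ and the $\alg$-profinite completion $\widehat{M_\Gamma}$ could conceivably differ; the coincidence of these topologies, and the compatibility of the induced $\widehat\Upsilon$-conjugacy action with the continuous extension of the $\alg$-action, is exactly what the separable-cohomology hypothesis on $\Upsilon$ is there to supply. Everything past that point is routine bookkeeping with profinite completions.
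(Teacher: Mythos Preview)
Your proposal is correct and follows exactly the paper's approach: the paper simply invokes \cite[Lemma 5.1]{Me} after noting that $\Upsilon$ has separable cohomology, and you do the same while additionally spelling out the verification of the remaining hypotheses (abelianness and finite generation over $\alg$ via Hall's max-$n$ theorem). Your explanation of why the separable-cohomology hypothesis is needed is a nice gloss that the paper leaves implicit.
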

    Furthermore, the profinite isomorphism $f$ descends to the profinite automorphism $A \in \operatorname{GL}_n(\pZ)$ on abelianisation. We may extend the latter $\pZ$-linearly to an automorphism of the profinite algebra $\palg$, which we shall also denote as $A$. The restriction $g \colon \widehat{M_\Delta} \to \widehat{M_\Gamma}$ must then satisfy the equation
    \begin{equation}\label{Eq::Twistt}
        g(\omega \cdot m) = A(\omega) \cdot g(m)
    \end{equation}
    whenever $\omega \in \palg$ and $m \in \widehat{M_\Delta}$.
    A priori, a morphism of profinite abelian groups satisfying the equation (\ref{Eq::Twistt}) might not be a homomorphism of $\palg$-modules, since the conjugation action of $\pZ^n$ on $\hG$ and $\hD$ is twisted by the profinite automorphism $A$ which may not factor through an isomorphism of $\widehat{M_\Gamma}$. However, by passing to a natural extension of the module $\widehat{M_\Gamma}$ which is free over the completed group algebra $\palg$, we will be able to construct a profinite isomorphism which ``untwists'' the profinite automorphism $A$. Indeed, consider the $\alg$-modules
    \[
    N_\Gamma = \alg \otimes_{\Z[\Gamma]} \mathcal{I}_\Gamma \qquad \text{and} \qquad N_\Delta = \alg \otimes_{\Z[\Delta]} \mathcal{I}_\Delta
    \]
    which fit into the respective extensions (\ref{Eq::SESMM}) associated to the metabelian groups $\Gamma$ and $\Delta$. By Theorem~\ref{Thm::HomMet}, the $\alg$-module $N_\Gamma$ is free of rank $n$. It follows via \cite[Lemma 5.3.5]{RZ} that the profinite $\palg$-module $\widehat{N_\Gamma}$ is free profinite, i.e. $\widehat{N_\Gamma} \cong \palg^l$ for some $l \in \mathbb{N}$. On the other hand, Proposition~\ref{Prop::Iso} yields isomorphisms
    \[
     \widehat{N_\Gamma} \cong \palg \widehat{\otimes}_{\pZ[\![\hG]\!]} \mathcal{I}_{\hG} \qquad \text{and} \qquad \widehat{N_\Delta} \cong \palg \widehat{\otimes}_{\pZ[\![\hD]\!]} \mathcal{I}_{\hD}
    \]
    as profinite $\palg$-modules.
    Consider hence the homomorphism of profinite abelian groups
\[
F \colon \widehat{N_\Gamma} \cong \palg \widehat{\otimes}_{\pZ[\![\hG]\!]} \mathcal{I}_{\hG} \longrightarrow \palg \widehat{\otimes}_{\pZ[\![\hD]\!]} \mathcal{I}_{\hD} \cong \widehat{N_\Delta}
\]
given by \[
F(\omega \otimes v) = A(\omega) \otimes \widetilde{f}(v)
\]
where $\widetilde{f} \colon \mathcal{I}_{\widehat{\Gamma}} \to \mathcal{I}_{\widehat{\Delta}}$ is the map induced by $f$ on augmentation ideals. As $A$ and $\widetilde f$ are both isomorphisms of profinite $\palg$-modules, the tensor product $F$ must be an isomorphism of profinite abelian groups. Moreover, we find that
\begin{align*}  
F(\lambda \cdot \omega \otimes v) &= A(\lambda \omega) \otimes \widetilde{f}(v) \\
&= A(\lambda) \cdot A(\omega) \otimes \widetilde{f}(v)\\
&= A(\lambda) \cdot F(\omega \otimes v)
\end{align*}
holds whenever $\lambda, \omega \in \palg$ and $v \in \mathcal{I}_{\widehat{\Gamma}}$. On the other hand, the continuous automorphism $A$ of the profinite $\pZ$-algebra $\palg$ induces a continuous automorphism $\widetilde{A}$ of the product $ \palg^l \cong \widehat{N_\Gamma}$ which leaves the coordinates invariant and acts separately on each coordinate as $A$. Then $\widetilde A$ is in particular an automorphism of the underlying additive profinite groups. Hence the composition
\[
\widetilde{F} \colon \widehat{N_\Gamma} \xrightarrow{\widetilde{A}^{-1}} \widehat{N_\Gamma} \xrightarrow{\,\,F \,\,} \widehat{N_\Delta}
\]
must also be an isomorphism of profinite abelian groups. It satisfies the equation
\begin{align*}
    \widetilde{F}(\lambda \cdot \vec{\omega}) &= F\widetilde A^{-1}(\lambda \cdot \vec{\omega}) \\
    &= F\left(A^{-1}(\lambda)\cdot \widetilde A^{-1}(\vec\omega)\right)\\
    &= A\left(A^{-1}(\lambda)\right)\cdot F\left(\widetilde A^{-1}(\vec\omega)\right) \\
    &= \lambda \cdot F(\vec \omega)
\end{align*}
   whenever $\lambda \in \palg$ and $\vec \omega \in \palg^l \cong \widehat{N_\Gamma}$. Thus $\widetilde{F}$ forms a $\palg$-equivariant isomorphism of profinite abelian groups, and hence an isomorphism of profinite $\palg$-modules. After restricting scalars to the dense discrete subalgebra $\alg$ (cf. \cite[Lemma 2.2]{Me}), we may view $\widetilde{F}$ also as an isomorphism of the respective $\alg$-profinite completions of the finitely generated discrete $\alg$-modules $N_\Gamma$ and $N_\Delta$. We have proven the following.
   \begin{lemma}\label{Lem::ProfIsom}
       There is an isomorphism of $\alg$-modules $\widehat{N_\Gamma} \cong \widehat{N_\Delta}$. 
   \end{lemma}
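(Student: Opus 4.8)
The plan is to construct the isomorphism explicitly out of the data already assembled, and then to correct for the twist introduced by the automorphism $A \in \operatorname{GL}_n(\pZ)$ that $f$ induces on abelianisations. The two inputs that make this possible are the freeness of $N_\Gamma$ over $\alg$ coming from Theorem~\ref{Thm::HomMet}, and the intrinsic description of the completions $\widehat{N_\Gamma}$, $\widehat{N_\Delta}$ furnished by Corollary~\ref{Cor::Iso}.

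First I would record the shape of the two completions. Since $N_\Gamma$ is free over $\alg$ by Theorem~\ref{Thm::HomMet}, the profinite $\palg$-module $\widehat{N_\Gamma}$ is free profinite of some finite rank $l$ by \cite[Lemma 5.3.5]{RZ}; meanwhile Corollary~\ref{Cor::Iso} identifies $\widehat{N_\Gamma} \cong \palg \,\widehat{\otimes}_{\pZ[[\hG]]}\, \mathcal{I}_{\hG}$ and $\widehat{N_\Delta} \cong \palg \,\widehat{\otimes}_{\pZ[[\hD]]}\, \mathcal{I}_{\hD}$ as profinite $\palg$-modules. Next I would use $f$ to write down a candidate morphism: $f$ induces a continuous isomorphism $\widetilde f \colon \mathcal{I}_{\hG} \to \mathcal{I}_{\hD}$ on augmentation ideals and descends on abelianisations to $A \in \operatorname{GL}_n(\pZ)$, which I extend $\pZ$-linearly to an automorphism of $\palg$, again denoted $A$. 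Setting $F(\omega \otimes v) := A(\omega)\otimes\widetilde f(v)$ defines an isomorphism of the underlying profinite abelian groups, but a direct computation shows it is only $A$-semilinear, $F(\lambda\omega\otimes v)=A(\lambda)\cdot F(\omega\otimes v)$.

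To remove the twist I would exploit that $\widehat{N_\Gamma}\cong\palg^l$: let $\widetilde A$ be the automorphism of this product acting as $A$ in each coordinate. It is an automorphism of the additive profinite group underlying $\widehat{N_\Gamma}$, so $\widetilde F := F\circ\widetilde A^{-1}$ is again an isomorphism of profinite abelian groups, and the semilinearity of $F$ together with $A(A^{-1}(\lambda))=\lambda$ shows $\widetilde F$ is genuinely $\palg$-linear. Hence $\widetilde F$ is an isomorphism of profinite $\palg$-modules, and restricting scalars to the dense subalgebra $\alg$ via \cite[Lemma 2.2]{Me} exhibits it as an isomorphism of the $\alg$-profinite completions $\widehat{N_\Gamma}\cong\widehat{N_\Delta}$, as required.

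The one genuinely delicate point is the untwisting step. A priori $F$ respects the $\palg$-action only up to $A$, and $A$ need not be realised by any $\palg$-automorphism of $\widehat{M_\Gamma}$ itself — the conjugation action of $\pZ^n$ on $\hG$ is twisted by $A$ — so one cannot untwist at the level of $M_\Gamma$. The resolution, and the reason Groves--Miller's theorem is invoked, is that freeness of $N_\Gamma$ lets one untwist \emph{coordinate-wise} on $\palg^l$, where $A$ lifts canonically. Beyond this, the only care needed is the bookkeeping that Corollary~\ref{Cor::Iso} genuinely identifies $\widehat{N_\Gamma}$ with the profinite tensor product on which $F$ is defined and that the conjugacy-module structures agree with the completion structures, but that is precisely the content of Lemma~\ref{Lem::ClosureVsCompletion} and Corollary~\ref{Cor::Iso}, so no further argument is required.
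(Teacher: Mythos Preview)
Your proposal is correct and follows the paper's argument essentially verbatim: you invoke the same ingredients (freeness of $N_\Gamma$ via Theorem~\ref{Thm::HomMet}, the identification of $\widehat{N_\Gamma}$ and $\widehat{N_\Delta}$ via Corollary~\ref{Cor::Iso}, the $A$-semilinear map $F(\omega\otimes v)=A(\omega)\otimes\widetilde f(v)$, and the coordinate-wise untwisting $\widetilde F = F\circ\widetilde A^{-1}$), and your closing paragraph even isolates the same delicate point the paper highlights, namely that the untwisting must happen on the free module $\widehat{N_\Gamma}\cong\palg^l$ rather than on $\widehat{M_\Gamma}$.
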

   To complete the proof of Theorem~\ref{MainThm}, note that the group algebra of the free abelian group $\Upsilon \cong \Z^n$ is the ring of Laurent polynomials $\alg \cong \Z[x_1^\pm, \ldots, x_n^\pm]$ which forms a finitely generated Noetherian domain by Hilbert's Basis Theorem. Moreover, $\alg$ is homologically taut by Theorem~\ref{Thm::Suslin}. Consequently, we may appeal to Theorem~\ref{Thm::Epic} to find that the free $\alg$-module $N_\Gamma$ is $\alg$-profinitely rigid. The profinite isomorphism in Lemma~\ref{Lem::ProfIsom} then implies the existence of an isomorphism of finitely generated discrete $\alg$-modules $N_\Gamma \cong N_\Delta$. In particular, the $\alg$-module $N_\Delta$ is free of rank $n$. Another application of Theorem~\ref{Thm::HomMet} then shows that the metabelian group $\Delta$ must be free metabelian of rank $n$. Hence $\Delta \cong \Gamma$ and $\Gamma$ is profinitely rigid in the absolute sense. This completes the proof of Theorem~\ref{MainThm}.

\printbibliography

\end{document}